\numberwithin{equation}{section}
\theoremstyle{plain}
\newtheorem{thm}{Theorem}[section]
\newtheorem{prop}[thm]{Proposition}
\theoremstyle{definition}
\newtheorem{defn}[thm]{Definition}
\newtheorem{quest}[thm]{Question}
\newcommand{\inlinemaketitle}{{\let\newpage\relax\maketitle}}
\newcommand{\be}{\begin{equation}}
\newcommand{\ee}{\end{equation}}
\def\TT{{\mathbb T}}
\def\NN{{\mathbb N}}
\def\dualSU2{\frac12\NN_0}
\def\ZZ{{\mathbb Z}}
\def\Gh{{\widehat{G}}}
\def\dpi{{d_\pi}}
\def\SU2{{\rm SU(2)}}
\DeclareMathOperator{\diag}{diag}
\DeclareMathOperator{\Ext}{Ext}
\DeclareMathOperator{\R}{Res}
\begin{document}
\title{Re-expansions on compact Lie groups}

\author[Rauan Akylzhanov]{Rauan Akylzhanov}
\address{
  Rauan Akylzhanov:
  \endgraf
 School of Mathematical Sciences
 \endgraf
Queen Mary University of London
\endgraf
United Kingdom
\endgraf
  {\it E-mail address} {\rm r.akylzhanov@qmul.ac.uk}
 }

\author[Elijah Liflyand]{Elijah Liflyand}
\address{
  Elijah Liflyand:
  \endgraf
  Department of Mathematics
  \endgraf
  Bar-Ilan University
  \endgraf
  Israel
  \endgraf
  {\it E-mail address} {\rm liflyand@gmail.com}
  }

\author[Michael Ruzhansky]{Michael Ruzhansky}
\address{
  Michael Ruzhansky:
  \endgraf
Department of Mathematics: Analysis,
Logic and Discrete Mathematics
  \endgraf
Ghent University, Belgium
  \endgraf
 and
  \endgraf
 School of Mathematical Sciences
 \endgraf
Queen Mary University of London
\endgraf
United Kingdom
\endgraf
  {\it E-mail address} {\rm michael.ruzhansky@ugent.be}
 }

%\author{Rauan Akylzhanov, Elijah Liflyand and Michael Ruzhansky}
%\date{September 9, 2018}

\date{\today}

%\subjclass{35K90, 42A85, 44A35.} 
\keywords{Fourier series, re-expansion, compact Lie groups, Hilbert transform}

\thanks{The authors were supported in parts by the FWO Odysseus Project, EPSRC grant EP/R003025/1 and by the Leverhulme Grant RPG-2017-151.}
\begin{abstract}
In this paper we consider the re-expansion problems on compact Lie groups. First, we establish weighted versions of classical re-expansion results in the setting of multi-dimensional tori. 
A natural extension of the classical re-expansion problem to general compact Lie groups can be formulated as follows: given a function on the maximal torus of a compact Lie group, what conditions on its (toroidal) Fourier coefficients   
are sufficient in order to have that the group Fourier coefficients of its central extension are summable. We derive the necessary and sufficient conditions for the above property to hold in terms of the root system of the group.
Consequently, we show how this problem leads to the re-expansions of even/odd functions on compact Lie groups, giving a necessary and sufficient condition in terms of the discrete Hilbert transform and the root system.
In the model case of the group $\SU2$ a simple sufficient condition is given.
\end{abstract}
\maketitle

\section{Introduction}

In the 50-s (see, e.g., \cite{IT1955} or in more detail \cite[Chapters II
and VI]{Kah}), the following problem in Fourier Analysis attracted
much attention:

{\it Let $\{a_k\}_{k=0}^\infty$ be the sequence of the Fourier
coefficients of the absolutely convergent sine (cosine) Fourier
series of a function $f:\mathbb T=[-\pi,\pi)\to \mathbb C,$ that is
$\sum |a_k|<\infty.$ Under which conditions on $\{a_k\}$ the
re-expansion of $f(t)$ ($f(t)-f(0)$, respectively) in the cosine
(sine) Fourier series will also be absolutely convergent?}

The obtained sufficient condition, sharp on the whole class, is quite simple and is the same in both
cases:

\begin{equation}
\label{condser-0}
\sum\limits_{k=1}^\infty |a_k|\ln(k+1)<\infty.
 \end{equation}

In \cite{L0}, a similar problem of the integrability of the
re-expansion for Fourier transforms of functions defined on $\mathbb R_+=[0,\infty)$
has been studied. Surprisingly, necessary and sufficient conditions in terms of the membership of
the sine or cosine Fourier transform in a certain Hardy space have been found.

In the present paper, we consider a similar problem on compact Lie groups.
There are special features in this study. A natural analogue of the classical re-expansion problem on a general compact Lie group $G$ can be formulated as follows: given a function $f=\sum\limits_{k\in\ZZ^l}a_k e^{ikt}$ on the maximal torus of $G$, what conditions on its Fourier coefficients $\{a_k\}_{k\in\ZZ^l}$  
are sufficient in order to have that the group Fourier coefficients of its central extension are summable (namely, to have $\widehat{\Ext[f]}\in\ell^1_{sch}(\Gh)$, with the appearing symbols explained in the following sections). The necessary and sufficient condition for this will be given in Theorem \ref{THM:Q1} in terms of the root structure of the group. Consequently, in Theorem \ref{THM:Q2} we show how this problem leads to the re-expansions of even/odd functions on compact Lie groups, and we derive a necessary and sufficient condition for it. In Theorem \ref{THM:Q2-SU2} we demonstrate the obtained criterion on the case of the compact Lie group $\SU2$.

The outline of the paper is as follows. In the next section we present general results
for compact Lie groups $G$ and a special case $G=\SU2$. Then we give detailed proofs.
In Section \ref{ac} we improve those old known results by obtaining necessary and sufficient
conditions rather than just \eqref{condser-0}. 

\section{Absolutely convergent Fourier series}\label{ac}

In this section we refine (\ref{condser-0}) in the sense that, like for the Fourier transforms,
the necessary and sufficient conditions will be established. Analogously to
the case of Fourier transform, they will be given in terms of the integrability
of Hilbert transforms, but discrete. The background for this analysis will be given in the next subsection.
After proving one-dimensional results, we will give their multivariate extensions.
However, for generalisations to compact Lie groups it is more representative to deal with the
weighted absolute convergence. We will present corresponding estimates both in
dimension one and in several dimensions.

\subsection{Discrete Hilbert transforms}

For the sequence $a=\{a_k\}\in\ell^1,$ $\ell^1:=\ell^1(\mathbb Z_+)$, the discrete Hilbert
transform is defined for $n\in\mathbb Z$ as (see, e.g., \cite[(13.127)]{King})

\begin{eqnarray}\label{dht} \hbar a(n)=\sum\limits_{k=-\infty
\atop k\ne n}^\infty\frac{a_k}{n-k}. \end{eqnarray}

If the sequence $a$ is either even or odd, the corresponding Hilbert
transforms $\hbar^e$ and $\hbar^o$ may be expressed in a special
form (see, e.g., \cite{and77} or \cite[(13.130) and
(13.131)]{King}). More precisely, if $a$ is even, with $a_0=0,$ we
have $\hbar^e(0)=0$ and we define

\begin{eqnarray}
\label{dhte}
 \hbar^e a(n)
 =
 \sum\limits_{k=1
\atop k\ne n}^\infty\frac{2na_k}{n^2-k^2}+\frac{a_n}{2n},\quad n=1,2,\ldots.
\end{eqnarray}
If $a$ is odd, with $a_0=0,$ we define 

\begin{eqnarray}\label{dhto} \hbar^o a(n)=\sum\limits_{k=1
\atop k\ne n}^\infty\frac{2ka_k}{n^2-k^2}-\frac{a_n}{2n}, ,\quad n=0,1,2,\ldots.
\end{eqnarray}
Of course, $\frac{a_0}{0}$ is considered to be zero.

One of the best sources for the theory of discrete Hilbert transforms
and Hardy spaces is \cite{BoCa}. For weighted estimates for them, see
\cite{and77}, \cite{ST1}, \cite{ST2} and \cite{L1}.

\subsection{One-dimensional case}

The obtained condition is quite simple and is the same in both cases:

\begin{eqnarray}\label{condser}\sum\limits_{k=1}^\infty|a_k|\ln(k+1)<\infty. \end{eqnarray}

Analysing the proof, say, in \cite{IT1955}, one can see that in fact more
general results are hidden in the proofs. They can be given in terms of
the (discrete) Hilbert transform.

\begin{thm}
\label{ans}
In order than the re-expansion $\sum b_k \sin kt$
of $f(t)-f(0)$ with the absolutely convergent cosine Fourier series be
absolutely convergent, it is necessary and sufficient that the discrete Hilbert
transform ${\hbar}a$ of the sequence $a$ of the sine Fourier coefficients of $f$ is summable.

Similarly, in order than the re-expansion $\sum b_k\cos kt$ of $f$ with the
absolutely convergent sine Fourier series be absolutely convergent, it is
necessary and sufficient that the discrete Hilbert transform ${\hbar}a$
of the sequence $a$ of the cosine Fourier coefficients of $f$ is summable.
\end{thm}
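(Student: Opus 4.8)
The plan is to reduce everything to an identity relating the two families of Fourier coefficients, and then recognize the resulting linear operator on sequences as (essentially) the discrete Hilbert transform.

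\textbf{Step 1: derive the passage formula between the sine and cosine coefficients.} Suppose $f$ has absolutely convergent cosine series $f(t)=\sum_{k\ge 0} a_k\cos kt$. Then $f(t)-f(0)=\sum_{k\ge 1}a_k(\cos kt-1)$, and we want its sine coefficients $b_n$, i.e. $b_n=\frac{2}{\pi}\int_0^\pi\big(f(t)-f(0)\big)\sin nt\,dt=\frac{2}{\pi}\sum_{k\ge 1}a_k\int_0^\pi(\cos kt-1)\sin nt\,dt$. The inner integrals are elementary: $\int_0^\pi(\cos kt)(\sin nt)\,dt$ equals $0$ when $n\pm k$ is even and equals $\frac{2n}{n^2-k^2}$ when $n\pm k$ is odd (and one must treat $k=n$ separately, picking up the diagonal term $\frac{a_n}{2n}$ up to constants), while $\int_0^\pi\sin nt\,dt$ contributes the $-\frac{a_n}{2n}$-type correction. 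Matching this against the definitions \eqref{dhte}, \eqref{dhto} of $\hbar^e$ and $\hbar^o$, one sees that, up to a fixed nonzero multiplicative constant and splitting $a$ into its even-indexed and odd-indexed parts, $b_n$ is given exactly by the even/odd discrete Hilbert transform of $a$ applied blockwise. The analogous computation with the roles of $\sin$ and $\cos$ interchanged (starting from an absolutely convergent sine series $f=\sum a_k\sin kt$ and re-expanding in cosines) produces the same kernel $\frac{1}{n-k}$ up to the sign of the diagonal and the parity bookkeeping.

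\textbf{Step 2: rephrase as boundedness/summability of $\hbar$ acting on $\ell^1$.} By Step 1, $\{b_n\}\in\ell^1$ if and only if $\{\hbar a(n)\}\in\ell^1$, where $\hbar$ is the discrete Hilbert transform \eqref{dht} (in the even/odd form appropriate to the case); this is an equivalence, not merely an implication, because the map $a\mapsto b$ and the finitely-many diagonal corrections are invertible modulo the part of $\hbar$ itself. This already yields both necessity and sufficiency, since the hypothesis "$f$ has absolutely convergent cosine (resp. sine) series" is exactly "$a\in\ell^1$", and the conclusion "the re-expansion is absolutely convergent" is exactly "$b\in\ell^1$". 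One should double-check that the interchange of summation and integration in Step 1 is justified — it is, because $\sum|a_k|<\infty$ and each $|\cos kt-1|\le 2$, so dominated convergence applies.

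\textbf{Step 3: relate to the classical sufficient condition \eqref{condser}.} Although not strictly part of the statement, it is worth recording (and the authors presumably do) that the known bound $\sum|a_k|\ln(k+1)<\infty$ is recovered from the weighted $\ell^1\to\ell^1$ estimate $\sum_n|\hbar a(n)|\lesssim \sum_k|a_k|\ln(k+1)$ for the discrete Hilbert transform; this is the quantitative statement that makes Theorem~\ref{ans} a genuine refinement rather than a restatement.

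\textbf{Main obstacle.} The delicate point is the bookkeeping in Step 1: getting the parity conditions, the diagonal terms $\pm\frac{a_n}{2n}$, and the constants exactly right so that the operator that appears is \emph{precisely} $\hbar^e$ on even sequences and $\hbar^o$ on odd sequences as defined in \eqref{dhte}–\eqref{dhto}, rather than some operator that merely resembles them. Equivalently, one must verify that the "extra" rank-one / diagonal pieces coming from the $f(0)$ subtraction and from $\int_0^\pi\sin nt\,dt$ can be absorbed without affecting $\ell^1$-membership, so that the equivalence "$b\in\ell^1\iff\hbar a\in\ell^1$" is clean in both directions. Everything else is a routine trigonometric integral plus an appeal to Fubini/Tonelli.
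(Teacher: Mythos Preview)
Your overall strategy coincides with the paper's: compute $b_n$ by integrating term-by-term against the trigonometric kernel, recognize the resulting operator on $\{a_k\}$ as a discrete Hilbert transform, and read off the $\ell^1$ equivalence. The gap is in your Step~1/``Main obstacle'' paragraph, where you assert that after parity bookkeeping the operator is \emph{precisely} $\hbar^e$ or $\hbar^o$ applied blockwise. It is not. What the computation actually produces is the \emph{halved} transform
\[
b_n=\hbar^{e}_-a(n)=\frac{4}{\pi}\sum_{\substack{k\ge 1\\ k-n\ \text{odd}}}\frac{n\,a_k}{n^2-k^2}
\qquad\text{(resp.\ }\hbar^{o}_-a\text{)},
\]
with the sum restricted to $k-n$ odd; in particular $k=n$ never occurs, so no diagonal term $\pm a_n/(2n)$ arises here (your remark about it is misplaced). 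Splitting $a$ into even- and odd-indexed parts does decouple the operator, but after reindexing (say $n=2m$, $k=2j-1$) the kernel $\dfrac{2m}{4m^2-(2j-1)^2}$ is \emph{not} the standard $\hbar^e$ or $\hbar^o$ kernel, so your ``blockwise'' reduction does not deliver the claimed identification.

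The paper closes this gap differently: it proves directly (see \eqref{halfodht}--\eqref{equive}) that
\[
\|\hbar^{o}_-a\|_{\ell^1}=\tfrac12\|\hbar^{o}a\|_{\ell^1}+O(\|a\|_{\ell^1}),
\qquad
\|\hbar^{e}_-a\|_{\ell^1}=\tfrac12\|\hbar^{e}a\|_{\ell^1}+O(\|a\|_{\ell^1}),
\]
by a shift-by-one trick: add and subtract the corresponding sum with $n$ replaced by $n+1$, combine to obtain the full (unrestricted) $\hbar^{e/o}$, and check that the leftover differences $\frac{1}{k\pm n}-\frac{1}{k\pm(n+1)}$ sum absolutely to $O(\|a\|_{\ell^1})$. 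That is the missing technical step in your proposal; once you have it, your Step~2 and Step~3 go through exactly as you wrote.
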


What, in fact, is proven in the mentioned papers, for $b=\{b_k\}_{k=0}^\infty,$

\begin{eqnarray}\label{cosi}b={\hbar}^e_- a\end{eqnarray}
in the first part of Theorem \ref{ans}, and

\begin{eqnarray}\label{sico}b={\hbar}^o_- a\end{eqnarray}
in the second one, where

\begin{eqnarray}\label{cosih}b={\hbar}^e_- a=\frac2\pi\sum\limits_{k=1\atop k-n\, odd}^\infty
a_k\left(\frac1{n+k}+\frac1{n-k}\right)=\frac4\pi\sum\limits_{k=1\atop k-n\, odd}^\infty
\frac{na_k}{n^2-k^2}\end{eqnarray}
and

\begin{eqnarray}\label{sicoh}b={\hbar}^o_- a=\frac2\pi\sum\limits_{k=1\atop k-n\, odd}^\infty
a_k\left(\frac1{n+k}+\frac1{k-n}\right)=\frac4\pi\sum\limits_{k=1\atop k-n\, odd}^\infty
\frac{ka_k}{k^2-n^2}\end{eqnarray}
is the halved even and odd discrete Hilbert transform, respectively. Indeed, for $f:[0,\pi]\to\mathbb C$,
with $f(0)=0$ for simplicity (which is necessary as well as $f(\pi)=0$), in the first case

\begin{eqnarray}\label{se} f_e(t)=\sum\limits_{k=1}^\infty a_k\cos kt,\end{eqnarray}
with $a\in\ell^1$, while

\begin{eqnarray}\label{fo}f_s(t)=\sum\limits_{n=1}^\infty b_n\sin nt,\end{eqnarray}
with

\begin{align*} b_n&=\frac2\pi\int_0^\pi f(t)\sin nt\,dt=\frac2\pi\sum\limits_{k=1}^\infty a_k\int_0^\pi \cos kt\sin nt\,dt\\
&=\frac1\pi\sum\limits_{k=1}^\infty a_k\int_0^\pi [\sin(n+k)t+\sin (n-k)t]\,dt\\&=\frac2\pi\sum\limits_{k=1\atop k-n\, odd}^\infty
a_k\left(\frac1{n+k}+\frac1{n-k}\right).\end{align*}
Similarly, in the second case

\begin{eqnarray}\label{so} f_s(t)=\sum\limits_{k=1}^\infty a_k\sin kt,\end{eqnarray}
with $a\in\ell^1$, while

\begin{eqnarray}\label{fe} f_e(t)=\sum\limits_{n=1}^\infty b_n\cos nt,\end{eqnarray}
with

\begin{align*} b_n&=\frac2\pi\int_0^\pi f(t)\cos nt\,dt=\frac2\pi\sum\limits_{k=1}^\infty a_k\int_0^\pi \cos nt\sin kt\,dt\\
&=\frac1\pi\sum\limits_{k=1}^\infty a_k\int_0^\pi [\sin(n+k)t+\sin (k-n)t]\,dt\\&=\frac2\pi\sum\limits_{k=1\atop k-n\, odd}^\infty
a_k\left(\frac1{n+k}+\frac1{k-n}\right).\end{align*}
This can be continued as follows:

\begin{align}\label{halfodht} \sum\limits_{n=1}^\infty\left|\sum\limits_{k=1\atop k-n\, odd}^\infty
a_k\left(\frac1{n+k}+\frac1{k-n}\right)\right|&=\sum\limits_{n=1}^\infty\biggl|\sum\limits_{k=1\atop k-n\, odd}^\infty
a_k\left(\frac1{n+k}+\frac1{k-n}\right)\nonumber\\
&+\frac12\sum\limits_{k=1\atop k-n\, odd, k\ne n+1}^\infty a_k\left(\frac1{n+1+k}+\frac1{k-n-1}\right)\nonumber\\
&-\frac12\sum\limits_{k=1\atop k-n\, odd, k\ne n+1}^\infty a_k\left(\frac1{n+1+k}+\frac1{k-n-1}\right)\biggr|\nonumber\\
=\frac12\sum\limits_{n=1}^\infty\biggl|\sum\limits_{k=1\atop k\ne n}^\infty a_k\left(\frac1{n+k}+\frac1{k-n}\right)
&+\sum\limits_{k=1\atop k-n\, odd}^\infty a_k\left(\frac1{n+k}-\frac1{k+n+1}\right)\nonumber\\&+
\sum\limits_{k=1\atop k-n\, odd}^\infty a_k\left(\frac1{k-n}-\frac1{k-n-1}\right)\biggr|.\end{align}
Since

$$\sum\limits_{n=1}^\infty\left|\sum\limits_{k=1\atop k-n\, odd}^\infty a_k\left(\frac1{k-n}-\frac1{k-n-1}\right)\right|
\le C\sum\limits_{k=1}^\infty |a_k|,$$
and the same is for the preceding sum, we have

\begin{eqnarray}\label{equivo}\|{\hbar}^o_- a\|_{\ell^1}=\frac12\|{\hbar}^o a\|_{\ell^1}+O(1).\end{eqnarray}
In the completely similar way one can prove that

\begin{eqnarray}\label{equive}\|{\hbar}^e_- a\|_{\ell^1}=\frac12\|{\hbar}^e a\|_{\ell^1}+O(1).\end{eqnarray}
Hence, replacing the necessary and sufficient condition of the summability of the halved Hilbert transforms,
which follows immediately from (\ref{cosi}) and (\ref{sico}), with the summability of the usual discrete
Hilbert transform, one arrives at the proof of the theorem.

In this case (\ref{condser}) is just a sufficient condition for the
summability of the discrete Hilbert transform, though sharp on the whole class.

\subsection{Multidimensional case}

Let $\eta=(\eta_1, ...,\eta_d)$ denote a $d$-dimensional vector with $\eta_j=0$ or
$\eta_j=1$ only, and $|\eta|=\eta_1+...+\eta_d\le d$. Of course, the vectors $\bf0=(0,0,...,0)$ and $\bf1=(1,1,...,1)$ are
among such vectors. The vectors $\chi$ and $\zeta$ will be understood and used similarly.

Starting from a function $f: \mathbb T^d_+=[0,\pi]^d\to \mathbb C$, with $f(x_1,...,x_{j-1},0,x_{j+1},...,x_d)=0,$
$j=1,2,...,d,$ we consider

\begin{equation}
\label{EQ:eta-symm}
f_\eta(x)=\sum\limits_{k\in\mathbb Z_+} a_k (\prod\limits_{j:\eta_j=1}\cos k_jx_j)
(\prod\limits_{j:\eta_j=0}\sin k_jx_j),
\end{equation}
with $a\in\ell^1$, where now $\ell^1:=\ell^1(\mathbb Z_+^d)$. The problem is under what conditions
in the re-expansion

$$f_{\bf1-\eta}(x)=\sum\limits_{m\in\mathbb Z_+} b_m (\prod\limits_{j:\eta_j=0}\cos m_jx_j)
(\prod\limits_{j:\eta_j=1}\sin m_jx_j)$$
we have $b\in \ell^1$. Since

\begin{align}\label{coefd} b_m&=\frac{2^d}{\pi^d}\int_{\mathbb T^d_+} f(x)(\prod\limits_{j:\eta_j=0}\cos m_jx_j)
(\prod\limits_{j:\eta_j=1}\sin m_jx_j)\,dx\\
&=\frac{2^d}{\pi^d}\sum\limits_{k\in\mathbb Z_+} a_k\int_{\mathbb T^d_+}(\prod\limits_{j:\eta_j=1}\cos k_jx_j\sin m_jx_j)
(\prod\limits_{j:\eta_j=0}\sin k_jx_j\cos m_jx_j)\,dx\nonumber\\
&=\frac{2^d}{\pi^d}\sum\limits_{k\in\mathbb Z_+\atop k_j-m_j\, odd, j=1,2,...,d}a_k\,\prod\limits_{j:\eta_j=1}
\left(\frac1{m_j+k_j}+\frac1{m_j-k_j}\right)\,\prod\limits_{j:\eta_j=0}\left(\frac1{m_j+k_j}+\frac1{k_j-m_j}\right).
\nonumber  \end{align}
It is quite natural to denote the right-hand side of (\ref{coefd}) by ${\hbar}_\eta^- a$
\begin{align}
\label{EQ:HT-def}
&{\hbar}_\eta^-a\\
=\sum\limits_{k\in\mathbb Z_+\atop k_j-m_j\, odd, j=1,2,...,d} & a_k\,\prod\limits_{j:\eta_j=1}
\left(\frac1{m_j+k_j}+\frac1{m_j-k_j}\right)\,\prod\limits_{j:\eta_j=0}\left(\frac1{m_j+k_j}+\frac1{k_j-m_j}\right).
\end{align}
The summability of ${\hbar}_\eta^-a$
is the necessary and sufficient condition for $b\in \ell^1$. This is a direct multidimensional generalisation of
the one-dimensional necessary and sufficient conditions of the summability of (\ref{cosi}) and (\ref{sico}).

Further, applying (\ref{halfodht}) in each variable, we arrive at an analog of (\ref{equivo}) and (\ref{equive}):

\begin{eqnarray}\label{equivd} \|{\hbar}_\eta^- a\|_{\ell^1}=\frac12\|{\hbar}_\eta^e {\hbar}_{\bf1-\eta}^o a\|_{\ell^1}+
O\left(\sum\limits_{0\le|\chi|+|\zeta|<|\eta|}\|{\hbar}_\chi^e {\hbar}_\zeta^o a\|_{\ell^1}\right),\end{eqnarray}
where

$${\hbar}_\chi^e a=(\prod\limits_{j:\chi_j=1} {\hbar}_j^e)a, \qquad {\hbar}_\zeta^o a=(\prod\limits_{j:\zeta_j=1} {\hbar}_j^o)a,$$
with ${\hbar}_j^e$ and ${\hbar}_j^o$ being ${\hbar}^e$ and ${\hbar}^o$, respectively, applied to the $j$th component of $a$.
In other words, the operators on the right-hand side of (\ref{equivd}) are mixed discrete Hilbert transforms. Their integrability
leads, correspondingly, to the hybrid discrete Hardy spaces (see \cite{L2}).

Thus, the right-hand side of (\ref{equivd}) consists of the leading term (repeated discrete Hilbert transforms applied to EACH of
the components of $a$) and the remainder term that consists of the $\ell^1$ norms of the repeated Hilbert transforms applied
only to a proper part of the components of $a$. Of course, $\|a\|_{\ell^1}$ is among them.

Therefore, $\|{\hbar}_\eta^e {\hbar}_{\bf1-\eta}^o a\|_{\ell^1}<\infty$ is a necessary and sufficient condition for $b\in \ell^1$
provided the remainder term in (\ref{equivd}) is finite.

And, of course, applying (\ref{condser}) in each variable, we have a sufficient condition

\begin{eqnarray}
\label{condserd}
\sum\limits_{k\in\mathbb Z_+^d}|a_k|\prod\limits_{j=1}^d\ln(k_j+1)<\infty.
\end{eqnarray}

\subsection{One-dimensional weighted case}

A more general problem arises if one assumes $\{k^q a_k\}$, $q=1,2,...,$ belongs to $\ell^1$
and figures out when  $\{n^q b_n\}$ belongs to $\ell^1$.
In fact, this means that if we start with (\ref{se}), the question reduces to that about

\begin{eqnarray}\label{seq} f_e^{(q)}(t)=\sum\limits_{k=1}^\infty k^q a_k\cos (kt+\frac{q\pi}2),\end{eqnarray}
and

\begin{eqnarray}\label{foq} (f_e^{(q)})_s(t)=\sum\limits_{n=1}^\infty n^q b_n\sin (nt+\frac{q\pi}2),\end{eqnarray}
while if we start with (\ref{so}), the question reduces to that about

\begin{eqnarray}\label{soq} f^{(q)}_s(t)=\sum\limits_{k=1}^\infty k^q a_k\sin(kt+\frac{q\pi}2),\end{eqnarray}
and

\begin{eqnarray}\label{feq} (f_s^{(q)})_e(t)=\sum\limits_{n=1}^\infty n^q b_n\cos (nt+\frac{q\pi}2).\end{eqnarray}
The problem, in fact, reduces to the initial one if one observes that

\begin{eqnarray}\label{conves} (f_e^{(q)})_s(t)=\pm f_s^{(q)}(t)\end{eqnarray}
and
\begin{eqnarray}\label{convse} (f_s^{(q)})_e(t)=\pm f_e^{(q)}(t).\end{eqnarray}
This follows from integration by parts $q$ times, where the integrated terms vanish sometimes automatically
or by assuming $f^{(j)})_s(0)$ and $f^{(j)}(\pi)=0$, $j=0,1,...,q-1.$
Taking now into account the arguments of the previous section leads to the following assertion.
To present it, denote $a^q=\{A_k^q\}=\{k^q a_k\}$ and $b^q=\{B_k^q\}=\{k^q b_k\}$. Since we study the $\ell^1$ summability
of these sequences, no matter if $-a^q$ is taken instead, or similarly $-b^q$.
Also, since $q$ is integer, $\cos (kt+\frac{q\pi}2)$ is either $\cos kt$ or $\sin kt$,
while $\sin (kt+\frac{q\pi}2)$ is, correspondingly $\sin kt$ or $\cos kt$, each time
up to a sign $\pm$.

\begin{thm}\label{ansq} Let $f^{(j)}(0)=f^{(j)}(\pi)=0$, $j=0,1,...,q-1.$
In order than the re-expansion $\sum B_k \sin kt$
of $f^{(q)}(t)$ with the absolutely convergent cosine Fourier series with coefficients
$a^q$ be absolutely convergent, it is necessary and sufficient that the discrete Hilbert
transform ${\hbar}^e a^q$ of the sequence $a^q$ is summable.

Similarly, in order than the re-expansion $\sum B_k\cos kt$ of $f^{(q)}$ with the
absolutely convergent sine Fourier series with coefficients $a^q$ be absolutely convergent, it is
necessary and sufficient that the discrete Hilbert transform ${\hbar}^o a^q$
of the sequence $a^q$ is summable.
\end{thm}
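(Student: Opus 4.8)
The plan is to reduce Theorem \ref{ansq} to Theorem \ref{ans} by the device already announced in the paragraphs preceding the statement, namely the identities \eqref{conves}--\eqref{convse}. First I would make the reduction precise. Starting from the cosine series \eqref{se} for $f$ with coefficients $a\in\ell^1$, differentiating $q$ times term by term gives \eqref{seq}, i.e. $f_e^{(q)}(t)=\sum_{k\ge 1}k^q a_k\cos(kt+\tfrac{q\pi}{2})$; since $q$ is an integer, $\cos(kt+\tfrac{q\pi}{2})$ equals $\pm\cos kt$ or $\pm\sin kt$ according to the parity of $q$, so $f_e^{(q)}$ is, up to an overall sign, either an absolutely convergent cosine series or an absolutely convergent sine series with coefficient sequence $a^q=\{k^q a_k\}$. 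The hypothesis $f^{(j)}(0)=f^{(j)}(\pi)=0$ for $j=0,1,\dots,q-1$ is exactly what is needed so that repeated integration by parts produces no boundary terms, which yields \eqref{conves}: the sine re-expansion of $f_e^{(q)}$ coincides (up to sign) with $f_s^{(q)}$, whose coefficients are $\pm\{n^q b_n\}=\pm b^q$. Hence $b^q\in\ell^1$ if and only if the sine re-expansion of $f_e^{(q)}$ is absolutely convergent.

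Next I would invoke Theorem \ref{ans} in the appropriate one of its two parts. In the first case $q$ is even, so $f_e^{(q)}$ is (up to sign) a cosine series with coefficients $a^q$; by the first part of Theorem \ref{ans} together with \eqref{cosi}, its sine re-expansion has coefficients $\hbar_-^e a^q$, and by \eqref{equive} we have $\|\hbar_-^e a^q\|_{\ell^1}=\tfrac12\|\hbar^e a^q\|_{\ell^1}+O(1)$, with the $O(1)$ term controlled by $C\|a^q\|_{\ell^1}<\infty$. Therefore $b^q\in\ell^1$ iff $\hbar^e a^q\in\ell^1$. When $q$ is odd, $f_e^{(q)}$ is instead (up to sign) a sine series with coefficients $a^q$, and I would need the dual bookkeeping: its cosine re-expansion is then $f_s^{(q)}$ after one further application of \eqref{convse}, so one chases through the parities once more. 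The only subtlety is that the roles of ``even'' and ``odd'' Hilbert transforms in the statement are pinned to whether the \emph{target} re-expansion is sine or cosine, not to the parity of $q$; so I would organise the argument as a small two-by-two case check (target = sine vs.\ cosine, $q$ even vs.\ odd) and verify that in each cell the operator that appears is $\hbar^e$ or $\hbar^o$ exactly as stated. The second assertion of the theorem is handled identically, starting from \eqref{so} instead of \eqref{se}, using \eqref{sico}, \eqref{equivo}, and \eqref{convse}.

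The main obstacle is not analytic but bookkeeping: one must track carefully the signs and the sine/cosine swaps introduced by the phase shift $\tfrac{q\pi}{2}$ and by the $q$-fold integration by parts, and confirm that the vanishing conditions $f^{(j)}(0)=f^{(j)}(\pi)=0$ suffice at every stage (in particular that the intermediate functions $f_e^{(j)}$, $f_s^{(j)}$ inherit the needed endpoint vanishing, which is where \cite{IT1955}-style arguments are implicitly used). Once the reduction \eqref{conves}--\eqref{convse} is justified, the equivalences \eqref{equive} and \eqref{equivo} — which are already established in the excerpt — immediately convert the ``halved'' Hilbert transform condition into the stated condition on $\hbar^e a^q$ or $\hbar^o a^q$, and there is nothing further to prove. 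I would close by remarking, as in the unweighted case, that \eqref{condser} applied to the sequence $a^q$ gives the simple sufficient condition $\sum_{k\ge1}k^q|a_k|\ln(k+1)<\infty$, sharp on the whole class.
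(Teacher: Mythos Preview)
Your proposal is correct and follows exactly the route the paper takes: the paper's ``proof'' of Theorem~\ref{ansq} is precisely the paragraph preceding its statement, namely the reduction to Theorem~\ref{ans} via the identities \eqref{conves}--\eqref{convse} obtained by $q$-fold integration by parts (with the boundary hypotheses killing the integrated terms), followed by the observation that $\cos(kt+\tfrac{q\pi}{2})$ and $\sin(kt+\tfrac{q\pi}{2})$ are each $\pm\cos kt$ or $\pm\sin kt$. Your two-by-two parity check is simply a more explicit rendering of what the paper disposes of in the sentence ``Also, since $q$ is integer\dots up to a sign $\pm$''; no further argument is required beyond \eqref{equive}--\eqref{equivo}, which are already in hand.
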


And in both cases the (sharp) sufficient condition is

\begin{eqnarray}\label{condserq}\sum\limits_{k=1}^\infty k^q|a_k|\ln(k+1)<\infty. \end{eqnarray}

\subsection{Multidimensional weighted case}

We will generalise the results from the previous subsection to several dimensions.
Let now $q=(q_1,...,q_d)$ be a vector with integer $q_j\ge0$. For $k\in\mathbb Z^d_+$,

$$k^q=k_1^{q_1}...k_d^{q_d}.$$
Our starting assumption will now be $k^q a_k\in\ell^1(\mathbb Z_+^d)$.

In what follows $D^q f$ will mean the partial derivative

\begin{eqnarray*} D^q f(x)=\left(\prod\limits_{j=1}^d
\frac{{\partial}^{q_j}}{\partial x_j^{q_j}}\right)f(x).          \end{eqnarray*}

Starting from a function $f: \mathbb T^d_+=[0,\pi]^d\to \mathbb C$, with $f(x_1,...,x_{j-1},0,x_{j+1},...,x_d)=0,$
$j=1,2,...,d,$ we consider

$$D^qf_\eta(x)=\sum\limits_{k\in\mathbb Z_+} k^q a_k (\prod\limits_{j:\eta_j=1}\cos (k_jx_j+\frac{q_j\pi}2))
(\prod\limits_{j:\eta_j=0}\sin (k_jx_j+\frac{q_j\pi}2)),$$
with $k^q a\in\ell^1$, where now $\ell^1:=\ell^1(\mathbb Z_+^d)$. The problem is under what conditions
in the re-expansion of $f$ in the form

$$(D^qf_\eta)_{\bf1-\eta}(x)=\sum\limits_{m\in\mathbb Z_+} m^q b_m (\prod\limits_{j:\eta_j=0}\cos (m_jx_j+\frac{q_j\pi}2))
(\prod\limits_{j:\eta_j=1}\sin (m_jx_j+\frac{q_j\pi}2))$$
we have $m^q b\in \ell^1$.

The proof of the next result is just a superposition and combination of the arguments from
the two previous sections. As above, we can think on $\cos k_jx_j$ and $\sin k_jx_j$ instead of
$\cos (k_jx_j+\frac{q_j\pi}2))$ and $\sin (k_jx_j+\frac{q_j\pi}2))$.

\begin{thm}\label{ansqd} Let $D^sf(x_1,...x_{j-1},0,x_{j+1},...,x_d)=D^sf(x_1,...x_{j-1},\pi,x_{j+1},...,x_d)=0$,
$j=0,1,...,q_j-1$, for any $s=(s_1,...,s_d)$ with $s_j=0,1,...,q_j-1$.
In order that the re-expansion

$$\sum\limits_{m\in\mathbb Z^d} m^q b_m (\prod\limits_{j:\eta_j=0}\cos m_jx_j)
(\prod\limits_{j:\eta_j=1}\sin m_jx_j)$$
of $D^qf$ with the absolutely convergent Fourier series with coefficients
$k^q a$ is absolutely convergent, it is necessary and sufficient that the discrete Hilbert
transform ${\hbar}_\eta^e {\hbar}_{\bf1-\eta}^o$ of the sequence $k^q a$ is summable
provided

$$\sum\limits_{0\le|\chi|+|\zeta|<|\eta|}\|{\hbar}_\chi^e {\hbar}_\zeta^o k^q a\|_{\ell^1}$$
is finite.
\end{thm}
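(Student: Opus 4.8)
The plan is to reduce Theorem~\ref{ansqd} to the unweighted multidimensional Theorem behind \eqref{equivd} by the same device used to pass from Theorem~\ref{ans} to Theorem~\ref{ansq}, now performed variable by variable. First I would record that, under the vanishing hypotheses $D^sf=0$ on the faces $x_j\in\{0,\pi\}$ for all multi-indices $s$ with $s_j\le q_j-1$, repeated integration by parts in the $j$-th variable turns a $\cos$ (resp. $\sin$) Fourier coefficient of $f$ into the corresponding coefficient of $D^qf$ up to the factor $k_j^{q_j}$ and a shift $k_jx_j\mapsto k_jx_j+\tfrac{q_j\pi}{2}$; no boundary terms survive because each such term is a lower-order $D^s f$ evaluated at an endpoint. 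This is the multivariate analogue of \eqref{conves}--\eqref{convse}, and it identifies the re-expansion coefficients $m^qb_m$ of $D^qf$ with $\hbar_\eta^-$ applied to the sequence $k^qa$, exactly as \eqref{coefd} did in the unweighted case with $a$ replaced by $k^qa$.

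Next I would invoke the unweighted multidimensional statement: the summability of $\hbar_\eta^- (k^qa)$ is necessary and sufficient for $m^qb\in\ell^1$, and by the identity \eqref{equivd} applied with $a$ replaced by $k^qa$ we get
\begin{equation*}
\|\hbar_\eta^-(k^qa)\|_{\ell^1}=\tfrac12\|\hbar_\eta^e\hbar_{\bf1-\eta}^o(k^qa)\|_{\ell^1}+O\!\left(\sum_{0\le|\chi|+|\zeta|<|\eta|}\|\hbar_\chi^e\hbar_\zeta^o(k^qa)\|_{\ell^1}\right).
\end{equation*}
Since $q$ is a vector of nonnegative integers, each shift $\tfrac{q_j\pi}{2}$ only toggles $\cos k_jx_j$ with $\sin k_jx_j$ and introduces a harmless sign, so the combinatorial structure of \eqref{coefd} and \eqref{equivd} is untouched; one just has to check that the bookkeeping of which indices carry $\hbar^e$ versus $\hbar^o$ remains consistent after these toggles, which it does because $\eta$ and ${\bf1-\eta}$ are complementary throughout. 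Consequently, once the remainder sum $\sum_{0\le|\chi|+|\zeta|<|\eta|}\|\hbar_\chi^e\hbar_\zeta^o(k^qa)\|_{\ell^1}$ is assumed finite, the left-hand side is finite exactly when $\|\hbar_\eta^e\hbar_{\bf1-\eta}^o(k^qa)\|_{\ell^1}<\infty$, which is the asserted criterion.

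I expect the only genuinely delicate point to be the justification of the termwise integration by parts and the interchange of summation and integration in the weighted setting: a priori we only know $k^qa\in\ell^1$, and the partial sums of $D^qf_\eta$ converge absolutely and uniformly, so one must argue that $D^qf$ is continuous (hence the boundary values in the hypothesis make sense and the integrations by parts are legitimate) and that its Fourier coefficients are computed termwise. This is routine given absolute convergence of $\sum k^q|a_k|$, but it is where the vanishing hypotheses on $D^sf$ are actually used, so I would state it carefully rather than gloss over it. The remaining steps — substituting $k^qa$ for $a$ in the already-established \eqref{coefd} and \eqref{equivd}, and reading off necessity and sufficiency — are then immediate, exactly as the paper indicates by calling the proof "just a superposition and combination of the arguments from the two previous sections."
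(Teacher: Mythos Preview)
Your proposal is correct and follows exactly the approach the paper intends: the paper's own proof is the single sentence ``The proof of the next result is just a superposition and combination of the arguments from the two previous sections,'' together with the remark that the phase shifts $\tfrac{q_j\pi}{2}$ merely swap $\cos$ and $\sin$. You have simply unpacked that sentence --- performing \eqref{conves}--\eqref{convse} in each variable to replace $a$ by $k^qa$, then invoking \eqref{coefd} and \eqref{equivd} --- and added the (appropriate) caveat about justifying termwise integration by parts, which the paper omits.
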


And, of course, applying (\ref{condserq}) in each variable, we have a sufficient condition

\begin{eqnarray}\label{condserdq}\sum\limits_{k\in\mathbb Z_+^d} k^q|a_k|\prod\limits_{j=1}^d\ln(k_j+1)<\infty. \end{eqnarray}

\section{Main results}
Let $G$ be a compact connected simply connected Lie group of rank $l$ and denote by $\Gh$ its unitary dual, i.e. the set of all irreducible inequivalent representations $\pi$.
There is a one-to-one correspondence between $\Gh$ and the set $\Lambda$ of the highest weights
\begin{equation}
\Gh\ni\pi \leftrightarrow \mu=(\mu_1,\ldots,\mu_l)\in\Lambda.
\end{equation}
Therefore, we use the symbol $\pi$ to denote both the class of equivalent representations and the corresponding highest weight $\mu$, i.e.
\begin{equation}
\Gh\ni\pi=\{\xi\in\Gh \colon \xi \,\text{ is equivalent to } \,\pi\} = (\pi_1,\ldots,\pi_l)\in\Lambda,
\end{equation}
where $\pi_k=\mu_k,\,k=1,\ldots,l$.
Each irreducible representation $\pi\in\Gh$ has the corresponding highest weight $\mu=(\mu_1,\ldots,\mu_l)$.
The Killing form restricted to $\mathfrak{t}$ allows us to define isometric isomorphism between the representation weights and the elements of $\mathfrak{t}$.

Denote by $T$ the maximal abelian subgroup of $G$. We identify $T$ with the $l$-dimensional torus $\TT^l$ where $l$ is the rank of $G$.
Let $R_+$ be a system of positive roots and let $W$ denote the Weyl group.
If we denote by $\chi_{\pi}$ the character corresponding to $\pi\in\Gh$ we have the Weyl character formula
\begin{equation}
\label{EQ:Weyl}
\chi_{\pi}(e^{iH})
=
\frac
{
\sum\limits_{w\in W}\det(w)e^{i(w\cdot(\mu+\delta),H)}
}
{
\sum\limits_{w\in W}\det(w)e^{i(w\cdot\delta,H)}
}
,\quad H\in\mathfrak{t},
\end{equation}
where $$\Delta(e^H)=\sum\limits_{w\in W}\det(w)e^{i(w\cdot\delta,H)}$$ is the Weyl function and $\delta$ denotes the half-sum of all positive roots.

%Since every weight can be thought of as a linear function on $\mathfrak{t}$, we can identify it with an element in $\mathfrak{t}$ via the inner product on $\mathfrak{t}$.
% We regard every weight $\mu$ of a representation $(\pi,V)$ as a non-zero element of $\mathfrak{t}$ with the property that there exists a non-zero $v\in V$
%such that
%$$
%\pi(H)v=(\mu,H)v
%$$
%for all $H\in\mathfrak{t}$.

We say that a function $g$ on $G$ is {\it central} if it satisfies
\begin{equation}
g(yty^{-1})=g(t)
\end{equation}
for all $y,t\in G$.
There is also the Weyl integral formula for central functions on $G$,
\begin{equation}
\label{EQ:Weyl-integral}
\int_{G}f(g)\,dg
=
\frac1{|W|}\int_{\TT^l}f(t)|\Delta(t)|^2\,dt,
\end{equation}
where $|W|$ denotes the cardinality of the Weyl group $W$(i.e. the number of elements in $W$).
It can be easily shown that
\begin{equation}
\label{EQ:Weyl-denominator0}
\Delta(t)
=
\prod\limits_{\alpha\in R_+}
(e^{i(\alpha,H)}+e^{-i(\alpha,H)}-2),
\end{equation}
where $R_+$ are the positive roots associated with $G$.

Let $a=\{a_k\}_{k\in\ZZ^l}\in\ell^1(\ZZ^l)$ and let
\begin{equation}
\label{EQ:torus-expansion}
f
=
\sum\limits_{k\in\ZZ^l}a_k e^{ikt}.
\end{equation}
%Denote by
%\begin{equation}
%\label{EQ:even-expansion}
%f_{even}(t)=\sum\limits_{k\in\ZZ^l}a_k \cos kt.
%\end{equation}
Let us denote by $\Ext[f]$ the central extension of $f$ to the group $G$, i.e.
\begin{equation*}
\{\text{functions on $\TT^l$}\}\ni f \mapsto \Ext[f]\in\{\text{functions on $G$}\},
\end{equation*}
where we define $\Ext[f]$ as follows
\begin{equation*}
\Ext[f](y^{-1}ty):=f(t),t\in\TT^l,y\in G.
\end{equation*}
It is clear that $\Ext[f]$ is a central function.
Analogously, we denote by $\R$ the restriction of $f$ from $G$ to $\TT^l$
\begin{equation*}
\{\text{functions on $G$}\}\ni f \mapsto \R[f] \in \{\text{functions on $\TT^l$}\},
\end{equation*}
where we define $\R[f]$ as follows
\begin{equation*}
\R[f]:=f\big|_{\TT^l}.
\end{equation*}
Motivated by the classical case, we seek summability conditions on the group Fourier coefficients in terms of the $\ell^p$-Schatten spaces $\ell^p_{sch}(\Gh)$ on $\Gh$.
Thus, for $1\leq p<\infty$, we define the space $\ell^p_{sch}(\Gh)$ of matrix valued sequences $\{\sigma=\{\sigma(\pi)\}_{\pi\in\Gh}\}$ endowed with the norm
$$
\|\sigma\|_{\ell^p_{sch}(\Gh)}
=
\left(
\sum\limits_{\pi\in\Gh}
\dpi
\|\sigma(\pi)\|^p_{S^p}
\right)^{\frac1p}
$$
with the obvious modification for $p=\infty$, where $S^p$ are the usual Schatten-von Neumann norms of matrices.

For a function $f\in L^1(G)$, as usual, we denote its Fourier coefficients by
$$
\widehat{f}(\pi)=\int_G f(g) \pi(g)^* dg,
$$
where $dg$ is the bi-invariant Haar measure on $G$. We refer to \cite{Ruzhansky:2010aa} for the necessary backgrounds for the Fourier analysis on the compact Lie groups.

\begin{quest}
Given a function $f=\sum\limits_{k\in\ZZ^l}a_k e^{ikt}$, what conditions on its Fourier coefficients $\{a_k\}_{k\in\ZZ^l}$  
are sufficient in order to have $\widehat{\Ext[f]}\in\ell^1_{sch}(\Gh)$.
\end{quest}
\begin{thm}
\label{THM:Q1}
Suppose that
$$
f=\sum\limits_{k\in\ZZ^l}a_k e^{ikt}.
$$
Then the Fourier coefficients $\widehat{\Ext[f]}$ of its extension $\Ext[f]$ belong to $\ell^1_{sch}(\Gh)$ if and only if
\begin{equation}
\label{EQ:condition}
\sum\limits_{\pi\in\Gh}
\dpi
\sum\limits^{\dpi}_{m=1}
\left|
\sum\limits^v_{j=1}a_{j}\widehat{\Delta^2}(\pi_{mm}-j)
\right|
<+\infty,
\end{equation}
where the number $0<v \leq \left|W\right|$ depend only on the Weyl group $W$ and
$d_{\pi}$ denotes the dimension of the representation $\pi\in\Gh$
\end{thm}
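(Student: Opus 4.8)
The plan is to compute the group Fourier coefficients $\widehat{\Ext[f]}(\pi)$ explicitly in terms of the toroidal coefficients $\{a_k\}$ and the Weyl denominator, and then to reduce the Schatten $\ell^1$-norm to the scalar sum appearing in \eqref{EQ:condition}. First I would use the Weyl integral formula \eqref{EQ:Weyl-integral} together with the definition of $\Ext$ to write, for any $\pi\in\Gh$,
\begin{equation*}
\widehat{\Ext[f]}(\pi)=\int_G \Ext[f](g)\pi(g)^*\,dg=\frac{1}{|W|}\int_{\TT^l} f(t)\,\overline{\chi_\pi(t)}\,|\Delta(t)|^2\,dt.
\end{equation*}
Since $\Ext[f]$ is central, $\widehat{\Ext[f]}(\pi)$ is a scalar multiple of the identity on the representation space, or — working in a weight basis adapted to the maximal torus — a diagonal matrix; this is where the diagonal entries $\pi_{mm}$ and the sum over $m=1,\dots,d_\pi$ in \eqref{EQ:condition} will come from. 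I would expand $\overline{\chi_\pi(t)}|\Delta(t)|^2$ using the Weyl character formula \eqref{EQ:Weyl} and \eqref{EQ:Weyl-denominator0}: multiplying $\chi_\pi \Delta$ against $\overline{\Delta}$ and using $\Delta(t)\overline{\Delta(t)}=|\Delta(t)|^2$, the numerator $\sum_{w\in W}\det(w)e^{i(w(\mu+\delta),H)}$ pairs with $\overline{\Delta}$ to yield a finite sum of exponentials indexed by the Weyl group, which explains the bound $v\le|W|$ on the range of the inner summation index $j$.

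Next, I would carry out the integral over $\TT^l$ term by term: each $\int_{\TT^l} e^{ikt}e^{-i\ell t}\,dt$ is an orthogonality relation picking out $k=\ell$, so the integral against $f=\sum_k a_k e^{ikt}$ selects finitely many Fourier coefficients $a_j$ and multiplies them by the corresponding Fourier coefficients $\widehat{\Delta^2}$ of $\Delta^2=|\Delta|^2$ (note $\Delta^2$ and $|\Delta|^2$ coincide up to the standard symmetry, and $\widehat{\Delta^2}$ is supported on a finite set determined by $R_+$). Collecting terms, the $(m,m)$ diagonal entry of $\widehat{\Ext[f]}(\pi)$ becomes exactly $\frac{1}{|W|}\sum_{j=1}^{v} a_j\,\widehat{\Delta^2}(\pi_{mm}-j)$, up to reindexing and the harmless constant $|W|$. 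At this point the membership $\widehat{\Ext[f]}\in\ell^1_{sch}(\Gh)$ unwinds: since the matrix is diagonal, $\|\widehat{\Ext[f]}(\pi)\|_{S^1}=\sum_{m=1}^{d_\pi}|(\widehat{\Ext[f]}(\pi))_{mm}|$, and the Schatten $\ell^1$-norm is $\sum_\pi d_\pi \|\widehat{\Ext[f]}(\pi)\|_{S^1}$, which is precisely \eqref{EQ:condition} up to the constant factor. The equivalence (iff) is then immediate because all steps are identities.

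I expect the main obstacle to be the careful bookkeeping in the second paragraph: correctly identifying which highest-weight shifts $w(\mu+\delta)$ survive after pairing $\chi_\pi\Delta$ with $\overline\Delta$, so that the inner index $j$ ranges over the claimed finite set of size $v\le|W|$ and the argument of $\widehat{\Delta^2}$ is the stated difference $\pi_{mm}-j$. Two technical points need care here: first, making precise the sense in which $\widehat{\Ext[f]}(\pi)$ is \emph{diagonal} — this requires choosing a basis of the representation space consisting of weight vectors and observing that a central function has Fourier coefficient commuting with the restriction $\pi|_T$, hence diagonal in that basis with entries indexed by the weights $\pi_{mm}$ of $\pi$ — and second, checking that $\widehat{\Delta^2}$ is a finitely supported sequence (a convolution of $|R_+|$ copies of the three-term sequence $\{1,-2,1\}$-type data coming from \eqref{EQ:Weyl-denominator0}), which guarantees that each inner sum in \eqref{EQ:condition} is a finite sum and that the reduction to the scalar condition is legitimate with no convergence issues beyond the single $\ell^1$ sum being controlled. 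Once these are in place, no further estimates are needed: the theorem is an exact computation of a Schatten norm, and the sufficiency of the simpler conditions (e.g. a logarithmic weight) would follow as a corollary by bounding $\widehat{\Delta^2}$ crudely, though that is not required for the statement as given.
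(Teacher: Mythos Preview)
Your proposal is correct and follows essentially the same route as the paper: apply the Weyl integral formula to reduce $\widehat{\Ext[f]}(\pi)$ to a toroidal integral, diagonalise $\pi|_{\TT^l}$ in a weight basis so that $\pi(t)_{mm}=e^{i\mu_m\cdot t}$, use the finite support of $\widehat{\Delta^2}$ to obtain \eqref{EQ:finite-sum}, and then identify $\|\widehat{\Ext[f]}(\pi)\|_{S^1}$ with $\sum_m|\widehat{\Ext[f]}(\pi)_{mm}|$. The only notable difference is in this last step: you justify the $S^1$-norm identity via centrality and Schur's lemma (scalar multiple of the identity, hence diagonal), whereas the paper invokes evenness of $f$ and Proposition~\ref{PROP:sa-even} to get self-adjointness of $\widehat{f}(\pi)$ --- your route is arguably cleaner and does not rely on an evenness hypothesis absent from the theorem statement, though you should be careful that ``scalar multiple of the identity'' forces all diagonal entries to coincide, so the dependence on $m$ in \eqref{EQ:condition} really enters only through the decomposition $\chi_\pi(t)=\sum_m e^{i\mu_m\cdot t}$ before taking absolute values.
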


The number $v$ in \eqref{EQ:condition} appears as follows: in view of the formula \eqref{EQ:Weyl-denominator0}, only a finite number of the (toroidal) Fourier coefficients of $\Delta^2$ are non-zero, leading to the finite number $v$ in \eqref{EQ:condition}.

The explicit expression of $d_{\pi}$ is given by  Weyl's dimension formula in \eqref{EQ:Weyl}.

%\begin{defn} We say that a function $f$ on $\TT^l$ is $\eta$-symmetric if its Fourier expansion is of the form \eqref{EQ:eta-symm}.
%\end{defn}
%\begin{defn} We say that a function $f$ defined on $G$ is $\eta$-symmetric if the restriction $\R[f]$ of $f$ to the maximal torus is $\eta$-symmetric.
%\end{defn}
\begin{prop}
\label{PROP:sa-even}
Let $f\in L^1(G)$ and $\pi\in\Gh$. Then we have
\begin{equation}
\widehat{f}(\pi)=\widehat{f}(\pi)^*,
\end{equation}
if and only if
\begin{eqnarray}
f \text{is real-valued},\\
f(g^{-1})=f(g),\quad g\in G.
\end{eqnarray}
\end{prop}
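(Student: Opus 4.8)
The plan is to compute $\widehat f(\pi)^*$ explicitly and reduce the identity $\widehat f(\pi)=\widehat f(\pi)^*$ to a pointwise symmetry of $f$ via the injectivity of the group Fourier transform. First I would record the two structural facts that drive everything: the matrix adjoint commutes with the Bochner integral, so that $\widehat f(\pi)^*=\int_G \overline{f(g)}\,\pi(g)\,dg$; and, since $\pi$ is unitary, $\pi(g)=\pi(g^{-1})^*$. Substituting and using the inversion-invariance of the Haar measure on the (unimodular) compact group $G$, I would rewrite this as $\widehat f(\pi)^*=\int_G \overline{f(g^{-1})}\,\pi(g)^*\,dg$. Writing $f^{\vee}(g):=f(g^{-1})$, this says precisely that $\widehat f(\pi)^*=\widehat{\,\overline{f^{\vee}}\,}(\pi)$ for every $\pi\in\Gh$.

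For the \emph{if} direction I would simply feed the two hypotheses into this formula: real-valuedness gives $\overline{f^{\vee}}=f^{\vee}$, and the symmetry $f(g^{-1})=f(g)$ gives $f^{\vee}=f$, whence $\widehat f(\pi)^*=\widehat f(\pi)$ for all $\pi\in\Gh$. This direction is routine. For the \emph{only if} direction I would read the hypothesis as self-adjointness for every $\pi\in\Gh$ (which is what makes the argument global), so that $\widehat f(\pi)=\widehat{\,\overline{f^{\vee}}\,}(\pi)$ for all $\pi$; by the injectivity of the Fourier transform on $L^1(G)$ (Peter--Weyl), this is equivalent to the single scalar identity $f(g)=\overline{f(g^{-1})}$ for almost every $g\in G$.

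The remaining step, which I expect to be the main obstacle, is to pass from this one complex equation to the \emph{conjunction} of real-valuedness and symmetry. Splitting $f=u+iv$ into its real and imaginary parts, the identity $f(g)=\overline{f(g^{-1})}$ is equivalent to the pair $u(g)=u(g^{-1})$ and $v(g)=-v(g^{-1})$, i.e. the real part is even and the imaginary part is odd. The difficulty is that this pair does \emph{not} force $v\equiv0$ on its own, so the two stated conditions do not follow from self-adjointness alone; the odd imaginary part has to be eliminated separately. I would therefore carry this step under the real-valuedness of $f$ as the operative hypothesis, so that $v\equiv0$ automatically and the condition collapses to the symmetry $u(g)=u(g^{-1})$, which is precisely the regime in which self-adjointness of $\widehat f(\pi)$ plays the role of the matrix analogue of real Fourier coefficients. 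Pinning down exactly where this hypothesis is used, and upgrading the almost-everywhere identities to the everywhere statements on the continuous (central) representatives relevant to the rest of the paper, is the part I would treat most carefully.
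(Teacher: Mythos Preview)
The paper states this proposition but does not supply a proof, so there is nothing to compare your approach against. On the substance, your computation is exactly right: for any $\pi\in\Gh$ one has $\widehat f(\pi)^*=\widehat{\,\overline{f^{\vee}}\,}(\pi)$, and hence (taking all $\pi$ and invoking Fourier uniqueness) the self-adjointness of every $\widehat f(\pi)$ is equivalent to the single identity $f(g)=\overline{f(g^{-1})}$ a.e. Your ``if'' direction is therefore immediate and matches how the proposition is actually used later in the paper, where $f$ is assumed real-valued and even in the sense of Definition~\ref{DEF:even-odd-G}.

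You have also correctly put your finger on a genuine defect in the \emph{statement}, not in your argument: the ``only if'' direction, as written, is false. The condition $f(g)=\overline{f(g^{-1})}$ does \emph{not} force $f$ to be real-valued and even separately; any function of the form $f=u+iv$ with $u$ real even and $v$ real odd satisfies it (e.g.\ $f=i v$ with $v$ real and odd gives self-adjoint Fourier coefficients while being purely imaginary). Your proposed fix---to treat real-valuedness as a standing hypothesis and then show that self-adjointness of all $\widehat f(\pi)$ is equivalent to evenness---is the correct reformulation, and it is precisely the version the paper needs (and uses) downstream. So there is no missing idea on your side; rather, the proposition should be read as: \emph{for real-valued $f\in L^1(G)$, one has $\widehat f(\pi)=\widehat f(\pi)^*$ for all $\pi\in\Gh$ if and only if $f(g)=f(g^{-1})$}.
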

\begin{defn}
\label{DEF:even-odd-G}
 A real-valued function $f$ on $G$ is called even or odd if
\begin{eqnarray}
f(g)=f(g^{-1}),\\
\text{or}\nonumber\\
f(g)=-f(g^{-1}),
\end{eqnarray}
respectively. 
\end{defn}
\begin{prop}
\label{PROP:odd-even}
  A function $f$ is odd or even on $G=U(n)$ in the sense of Definition \ref{DEF:even-odd-G} if and only if
  its restriction $f\big|_{\TT^l}$ to the maximal torus $\TT^l$ is odd or even respectively.
\end{prop}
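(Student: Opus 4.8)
The plan is to reduce the statement on $G=U(n)$ to a statement about conjugacy classes, exploiting the fact that the even/odd symmetry $g\mapsto g^{-1}$ interacts well with the decomposition of $U(n)$ into conjugacy classes and the identification of each class with a $W$-orbit on the maximal torus $\TT^l$. The ``only if'' direction is immediate: if $f$ is even (resp.\ odd) on $G$, then for $t\in\TT^l$ one has $t^{-1}\in\TT^l$ as well, so $f|_{\TT^l}(t^{-1})=f(t^{-1})=\pm f(t)=\pm f|_{\TT^l}(t)$, which is exactly the required symmetry of the restriction. So the content is in the ``if'' direction.

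For the ``if'' direction I would first record the key group-theoretic fact specific to $U(n)$: every element $g\in U(n)$ is conjugate to a diagonal element $t\in\TT^l$, i.e.\ $g=y^{-1}ty$ for some $y\in U(n)$, and moreover $g^{-1}=y^{-1}t^{-1}y$ is conjugate to $t^{-1}\in\TT^l$. Hence, since $f$ is by hypothesis central (this is implicit in the set-up: $f$ arises as $\Ext$ of a function on $\TT^l$, or one should simply add the hypothesis that $f$ is central, which is the natural ambient class here), we get
\begin{equation*}
f(g)=f(t),\qquad f(g^{-1})=f(t^{-1}).
\end{equation*}
Now if $f|_{\TT^l}$ is even, then $f(t^{-1})=f|_{\TT^l}(t^{-1})=f|_{\TT^l}(t)=f(t)$, so $f(g^{-1})=f(g)$ for all $g$; the odd case is identical with a sign. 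The one genuinely necessary check is consistency/well-definedness: the diagonalisation $g=y^{-1}ty$ is not unique, $t$ being determined only up to the action of the Weyl group $W=S_n$ (permutation of diagonal entries), so one must verify that $f|_{\TT^l}(t)$ does not depend on the chosen representative $t$ of the $W$-orbit. This holds because $f$ central forces $f|_{\TT^l}$ to be $W$-invariant (permuting diagonal entries of $t$ is realised by conjugation by a permutation matrix in $U(n)$), and a $W$-invariant, $W$-equivariantly-even function descends consistently. I would spell this out: $t^{-1}$ lies in the same $W$-orbit as $t$ only when $f|_{\TT^l}$ is assumed even as a function on $\TT^l$, which is precisely our hypothesis, so no clash arises.

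The main obstacle, such as it is, is purely bookkeeping: one must be careful that ``$f$ odd/even on $\TT^l$'' in the statement means the honest torus symmetry $t\mapsto t^{-1}$ (inversion of all coordinates $e^{i\theta_j}\mapsto e^{-i\theta_j}$), and one should confirm that inversion on $\TT^l$ commutes with the $W=S_n$ action so that the induced map on conjugacy classes is well-defined; both are transparent for $U(n)$. A secondary point worth a sentence is the role of $U(n)$ versus a general compact Lie group: the argument uses only that every element is conjugate into the maximal torus and that $g\mapsto g^{-1}$ preserves this — the first is true for any compact connected Lie group, and the second needs $-1$ to act on the torus as a Weyl-group element or at least to preserve $W$-orbits, which is automatic for $U(n)$ but can fail in general (e.g.\ $-1\notin W$ for $SU(2)$ acting on weights, though there it still works by accident); restricting to $U(n)$ sidesteps this. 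I expect the write-up to be short, the only delicate line being the well-definedness check via $W$-invariance of the restriction.
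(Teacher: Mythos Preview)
Your argument is correct and mirrors the paper's proof exactly: both directions hinge on centrality of $f$ together with the observation that if $g=y^{-1}ty$ then $g^{-1}=y^{-1}t^{-1}y$, which reduces the symmetry on $G$ to the torus symmetry $t\mapsto t^{-1}$. Your well-definedness digression is superfluous (once $f$ is central, $f(t)$ automatically depends only on the conjugacy class of $t$, so no separate consistency check is needed), and your parenthetical about $\SU2$ is incorrect---for $\SU2$ the Weyl group is $\ZZ/2\ZZ$ acting precisely by inversion on the torus, so $-1$ \emph{is} in $W$.
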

Here $n=l$.
\begin{proof}[Proof of Proposition \ref{PROP:odd-even}]
Let $f$  be even. The case of odd function is analogous. Denote $h=f\big|_{\TT^l}$.  We write

\begin{eqnarray*}
e^{2\pi i t}
=
\left[
\begin{array}{ccc}
 e^{2\pi i t_1} & 0  & 0  \\
0  & \ddots  &  0 \\
0  & 0  &   e^{2\pi i t_n}
\end{array}
\right].
\end{eqnarray*}
Since $f$ is central and odd (see Definition \ref{DEF:even-odd-G}), we have
$$
h(t)=
f(e^{2\pi it})
=
f(e^{-2\pi it})
=
h(-t).
$$
If $h$ is odd, then we have
\begin{multline*}
f(x)=
f(y^{-1}e^{2\pi i t}y)
=
f(e^{2\pi i t})
=
h(t)
=
h(-t)
\\=
f(e^{-2\pi i t})
=
f(y^{-1}e^{-2\pi i t}y)
=
f(x^{-1}), \quad x,y\in G.
\end{multline*}
This completes the proof.
\end{proof}
We write $g_{\eta}=\R[f]$ and $\tilde{f}=\Ext[g_{1-\eta}]$.
The combination of Theorem \ref{THM:Q1} and Theorem \ref{ans} immediately yields the following theorem.
\begin{thm}
\label{THM:Q2}
Let $f\in L^1(G)$. Suppose that $f$ is even and its Fourier coefficients $\widehat{f}$ are integrable over $\Gh$:
\begin{equation}
\label{EQ:f-hat-integrable}
\widehat{f}\in\ell^1_{sch}(\Gh).
\end{equation}
Then the Fourier coefficients $\widehat{\Ext[f_{odd}]}$ of its odd re-expansion $f_{odd}$ are integrable
\begin{equation}
\widehat{\Ext[f_{odd}]}\in\ell^1_{sch}(\Gh),
\end{equation}
if and only if
\begin{equation}
\sum\limits_{\pi\in\Gh}
\dpi
\sum\limits^{\dpi}_{m=1}
\left|
{\hbar}
\left(
\sum\limits^v_{j=1}a_{j}\widehat{\Delta^2}(\pi_{mm}-j)
\right)
\right|
\leq
C
\sum\limits_{\pi\in\Gh}
\dpi
\sum\limits^{\dpi}_{m=1}
\left|
\sum\limits^v_{j=1}a_{j}\widehat{\Delta^2}(\pi_{mm}-j)
\right|,
\end{equation}
with the notations of Theorem \ref{THM:Q1}.
\end{thm}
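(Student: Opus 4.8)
The plan is to read Theorem~\ref{THM:Q2} off from Theorem~\ref{THM:Q1} and Theorem~\ref{ans}, so that the proof becomes a translation between the torus side and the group side together with one analytic estimate. First I would pass to the maximal torus: since only $\Ext[\R f]$ enters condition \eqref{EQ:condition}, there is no loss in taking $f=\Ext[h]$ with $h=\R[f]$, and by Proposition~\ref{PROP:odd-even} the evenness of $f$ on $G$ is exactly the evenness (and Weyl-invariance) of $h$ on $\TT^l$, so $h=\sum_{k\in\ZZ^l}a_ke^{ikt}$ is of cosine type with $a\in\ell^1$. By construction $f_{odd}=\Ext[h_{odd}]$, where $h_{odd}$ is the sine series into which $h-h(0)$ re-expands, and the computation performed right after Theorem~\ref{ans} (formulas \eqref{cosi}, \eqref{cosih}) identifies its coefficient sequence as $b=\hbar^e_-a$, the halved even discrete Hilbert transform of $a$.

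Next I would apply Theorem~\ref{THM:Q1} twice. Writing $u(n):=\sum_j a_j\widehat{\Delta^2}(n-j)=(a*\widehat{\Delta^2})(n)$ for the convolution of $a$ with the finitely supported sequence coming from \eqref{EQ:Weyl-denominator0}, the theorem applied to $f=\Ext[h]$ says that the standing hypothesis \eqref{EQ:f-hat-integrable} is exactly
$$
S:=\sum_{\pi\in\Gh}\dpi\sum_{m=1}^{\dpi}\bigl|u(\pi_{mm})\bigr|<\infty ,
$$
while applied to $f_{odd}=\Ext[h_{odd}]$ it says that $\widehat{\Ext[f_{odd}]}\in\ell^1_{sch}(\Gh)$ if and only if the same expression with $b$ in place of $a$, say $S_b:=\sum_\pi\dpi\sum_m|(b*\widehat{\Delta^2})(\pi_{mm})|$, is finite. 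Since the quantity appearing in the theorem is $S_{\hbar}:=\sum_\pi\dpi\sum_m|(\hbar u)(\pi_{mm})|$ with $\hbar u=\hbar(a*\widehat{\Delta^2})$, everything is reduced to comparing the sequences $(\hbar^e_-a)*\widehat{\Delta^2}$ and $\hbar(a*\widehat{\Delta^2})$ after the filtering $x\mapsto\sum_\pi\dpi\sum_m|x(\pi_{mm})|$.

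For this comparison I would use that $\hbar$ is a convolution operator, hence commutes with convolution by the finitely supported $\widehat{\Delta^2}$, and that $\hbar^e=\hbar$ on even sequences vanishing at the origin; combining these with the \emph{pointwise} identity underlying \eqref{equive} (read off from the regrouping in \eqref{halfodht}, and, in several variables, from \eqref{equivd}), one writes $\hbar^e_-a=\kappa\,\hbar a+e$ with a fixed nonzero constant $\kappa$ and a remainder $e$ that is a finite superposition of first differences of the Hilbert kernel $1/n$ convolved against $a$, so that $\|e\|_{\ell^1}=O(\|a\|_{\ell^1})$. Hence $(\hbar^e_-a)*\widehat{\Delta^2}=\kappa\,\hbar u+e*\widehat{\Delta^2}$, and the triangle inequality in both directions gives $\bigl|S_b-|\kappa|S_{\hbar}\bigr|\le R:=\sum_\pi\dpi\sum_m|(e*\widehat{\Delta^2})(\pi_{mm})|$. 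Granting the estimate $R\le C\,S$ — or, equally usefully, the two-sided bounds $S_b\le C(S_{\hbar}+S)$ and $S_{\hbar}\le C(S_b+S)$ — and recalling $S<\infty$, the assertions $\widehat{\Ext[f_{odd}]}\in\ell^1_{sch}(\Gh)$, $S_b<\infty$ and $S_{\hbar}<\infty$ all coincide; and since the degenerate case $S=0$ forces $h\equiv f\equiv f_{odd}\equiv0$ and is trivial, ``$S_{\hbar}<\infty$'' is just ``$S_{\hbar}\le C\,S$ for some constant $C$'', which is the stated criterion.

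The hard part is precisely this estimate on $R$: one must unwind the remainder in \eqref{equive}/\eqref{equivd} explicitly and show that its building blocks — finite differences of $1/n$ — telescope, after the convolution by $\widehat{\Delta^2}$ and the $\dpi$-weighted evaluation at the weights $\pi_{mm}$ dictated by \eqref{EQ:condition}, against the same sum $S$. Because $\dpi$ grows polynomially (Weyl's dimension formula) one cannot afford to use only the $\ell^1$ size of $e$ and must exploit its differenced structure; in effect this is the weighted one-dimensional estimate of Theorems~\ref{ansq}--\ref{ansqd} transplanted through the spectral decomposition, and it is the one point where the root system — via $\widehat{\Delta^2}$ — genuinely enters. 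Everything else (the reduction to the torus, the two uses of Theorem~\ref{THM:Q1}, the identity $b=\hbar^e_-a$, and the commutation of $\hbar$ with convolution by $\widehat{\Delta^2}$) is routine bookkeeping.
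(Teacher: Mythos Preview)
Your approach is exactly the one the paper has in mind: the paper's entire proof of Theorem~\ref{THM:Q2} is the single sentence ``The combination of Theorem~\ref{THM:Q1} and Theorem~\ref{ans} immediately yields the following theorem,'' placed just before the statement. So on the level of strategy --- restrict to the torus, recognise the odd re-expansion coefficients as a halved Hilbert transform via \eqref{cosi}--\eqref{cosih}, and then apply Theorem~\ref{THM:Q1} on both sides --- you are doing precisely what the authors intend, only with far more detail than they supply.

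Where you go beyond the paper is in isolating the comparison between $(\hbar^e_-a)*\widehat{\Delta^2}$ and $(\hbar a)*\widehat{\Delta^2}$ under the $\dpi$-weighted evaluation at the weights $\pi_{mm}$, and in flagging the remainder bound $R\le C\,S$ as the substantive analytic step. You are right that the pointwise regrouping behind \eqref{halfodht}--\eqref{equive} gives only an $\ell^1$ control on the error term $e$, and that the polynomial growth of $\dpi$ means this is not literally enough; one needs the weighted analogue, in the spirit of Theorems~\ref{ansq}--\ref{ansqd}. The paper does not carry this out either --- its ``immediately yields'' sweeps exactly this point under the rug. So your outline is at least as complete as the paper's own argument, and your identification of the hard part is accurate; it is simply a step the paper asserts rather than proves.
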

For the model case $G=\SU2$, we give a simple sufficient condition. Here we use the standard identification $\widehat{\SU2}\simeq \frac12\mathbb N_0$, see e.g. \cite{Vilenkin:1968aa} or \cite{Ruzhansky:2010aa}.

\begin{thm}
\label{THM:Q2-SU2}
 Let $f\in L^1(\SU2)$. Suppose that $f$ is even and its Fourier coefficients $\widehat{f}$ are integrable over $\widehat{\SU2}$:
\begin{equation}
\widehat{f}\in\ell^1_{sch}(\widehat{\SU2}).
\end{equation}
Then the Fourier coefficients $\widehat{\Ext[f_{odd}]}$ of its odd re-expansion $f_{odd}$ are integrable
\begin{equation}
\widehat{\Ext[f_{odd}]}\in\ell^1_{sch}(\widehat{\SU2}),
\end{equation}
provided that
\begin{equation}
\sum\limits_{l\in\frac12\NN_0}
(2l+1)
\log(2l+1)
|a_{2l+1}|
<+\infty.
\end{equation}
\end{thm}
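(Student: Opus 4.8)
The plan is to specialise Theorem \ref{THM:Q2} to $G = \SU2$ and then bound the discrete Hilbert transform crudely by the weighted $\ell^1$-condition. First I would compute the relevant ingredients explicitly for $\SU2$: here the rank is $l=1$, the Weyl group has order $2$, and the Weyl denominator is $\Delta(e^{it}) = e^{it} + e^{-it} - 2$, so $\Delta^2(e^{it})$ is a trigonometric polynomial supported on the frequencies $\{-2,-1,0,1,2\}$, giving $v \le |W| = 2$ in \eqref{EQ:condition} and explicit values for $\widehat{\Delta^2}$. The irreducible representation $\pi = l \in \tfrac12\NN_0$ has dimension $d_\pi = 2l+1$, and the diagonal weights $\pi_{mm}$ run through $\{-2l, -2l+2, \ldots, 2l\}$ (equivalently $\{2l, 2l-2,\ldots,-2l\}$). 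Substituting these into the right-hand side of Theorem \ref{THM:Q1} / Theorem \ref{THM:Q2}, the inner sum $\sum_{j=1}^v a_j \widehat{\Delta^2}(\pi_{mm} - j)$ becomes a fixed finite linear combination of at most four of the coefficients $a_k$ with $k$ near $\pi_{mm}$, with $k$ of one fixed parity (this is where the odd index $2l+1$ in the hypothesis enters).

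Next I would invoke Theorem \ref{THM:Q2}: the integrability of $\widehat{\Ext[f_{odd}]}$ is equivalent to the summability of the discrete Hilbert transform ${\hbar}$ applied to the sequence $c = \{c_n\}$ with $c_n = \sum_{j=1}^v a_j \widehat{\Delta^2}(n - j)$ (indexed so that $n$ ranges over the diagonal weights, counted with the multiplicity $d_\pi$ coming from the outer $\sum_\pi d_\pi$ and the inner $\sum_{m=1}^{d_\pi}$). Rather than analyse ${\hbar} c$ exactly, I would bound $\|{\hbar} c\|_{\ell^1}$ by $\|c\|_{\ell^1}$ up to a logarithmic factor: by the sufficient condition \eqref{condser} (applied to the sequence $c$, or rather by the classical estimate $\|{\hbar} c\|_{\ell^1} \lesssim \sum_k |c_k| \ln(k+1)$ that underlies it), summability of ${\hbar} c$ follows from $\sum_n |c_n| \ln(n+1) < \infty$. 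Since $c$ is a finite-band convolution of $a$ with $\widehat{\Delta^2}$, we get $\sum_n |c_n|\ln(n+1) \lesssim \sum_k |a_k|\ln(k+1)$ with an absolute constant. Accounting for the multiplicity: each frequency $k = 2l+1$ is hit as a diagonal entry $\pi_{mm}$ with total multiplicity comparable to $\sum_{l : 2l+1 \le |k|} d_\pi \sim$ a constant times $(2l+1)$ — this is the origin of the factor $(2l+1)$ in the stated condition. Combining these gives summability of $\widehat{\Ext[f_{odd}]}$ whenever $\sum_{l} (2l+1)\log(2l+1)|a_{2l+1}| < \infty$.

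The main obstacle I anticipate is the bookkeeping of multiplicities: one must track carefully how the double sum $\sum_{\pi\in\Gh} d_\pi \sum_{m=1}^{d_\pi}$ over $\SU2$ reorganises as a single weighted sum over integer (here, odd-integer) frequencies, and verify that the weight is exactly of order $2l+1$ rather than, say, $(2l+1)^2$. Concretely, the frequency $k$ appears as $\pi_{mm}$ precisely for those $l$ with $|k| \le 2l$ and $k \equiv 2l \pmod 2$, and each such appearance is further weighted by $d_\pi = 2l+1$ from the outer sum; one has to check that after the substitution $n\mapsto \pi_{mm}$ and using that only finitely many $\widehat{\Delta^2}(n-j)$ are nonzero, the contribution of a given $a_k$ is controlled by $(2l+1)\log(2l+1)|a_k|$ with $k = 2l+1$. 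A secondary point requiring care is the parity shift caused by the support $\{-2,-1,0,1,2\}$ of $\widehat{\Delta^2}$: since $\Delta^2$ contains odd frequencies, the diagonal weights (which have the parity of $2l$, i.e. even after the shift) couple to the coefficients $a_k$ with $k$ odd, explaining why only $a_{2l+1}$, and not $a_{2l}$, appears in the hypothesis. Once these counting issues are settled, the estimate is a direct application of \eqref{condser} and Theorem \ref{THM:Q2}.
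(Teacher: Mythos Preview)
Your multiplicity bookkeeping is where the proposal breaks down. You correctly note that a fixed integer frequency $k$ occurs as a diagonal weight $\pi_{mm}$ for every $l$ with $|k|\le 2l$ and $k\equiv 2l \pmod 2$, each time carrying the outer factor $d_\pi=2l+1$; but then the total weight attached to $a_k$ is $\sum_{l\ge |k|/2}(2l+1)$, which diverges, not something of order $2l+1$. So the reorganisation you sketch cannot produce the stated bound, and the ``main obstacle'' you flag is in fact fatal for this route.

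The paper avoids this entirely by a step you do not have: for fixed $l$ it \emph{sums the inner $m$-sum first and telescopes}. In $\SU2$ the relevant convolution with $\widehat{|\Delta|^2}$ gives a second difference, and one checks directly that
\[
\sum_{m=1}^{2l+1}\bigl(a_{m-2}-2a_m+a_{m+2}\bigr)
= -a_{2l}-a_{2l+1}+a_{2l+2}+a_{2l+3},
\]
so the whole inner sum collapses to a bounded number of boundary terms with indices near $2l+1$. Only after this collapse does the outer factor $d_\pi=2l+1$ sit against a single $a_{2l+1}$-type coefficient, which is exactly why the hypothesis involves $(2l+1)|a_{2l+1}|$ and not a quadratic or divergent weight. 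With the telescoping in hand, the question becomes the finiteness of $\sum_n n\,|{\hbar}a(n)|$ (writing $n=2l+1$), and the paper then redoes the Izumi--Tsuchikura computation \emph{with the extra weight $n$} to obtain $\sum_n n|{\hbar}a(n)|\lesssim \sum_k k\log k\,|a_k|$; your plan to invoke the unweighted estimate \eqref{condser} would not suffice here even after the telescoping. In short: insert the telescoping identity, then run the weighted version of the classical $M+N$ splitting, and the argument goes through.
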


\section{Proofs}
\begin{proof}[Proof of Theorem \ref{THM:Q2-SU2}]
It can be checked straightforwardly
\begin{equation*}
\sum\limits^{2l+1}_{m=1}
\left(
a_{m-2}-2a_m+a_{m+2}
\right)
=
a_{-1}-a_1-a_{2l}
-a_{2l+1}
+a_{2l+2}
+a_{2l+3},
\end{equation*}
where without the loss of generality we can assume that $a_0=a_{-1}=0$.
Thus, we get
\begin{equation}
\sum\limits^{2l+1}_{m=1}
a_{m-2}-2a_m+a_{m+2}
=
-a_{2l}
-a_{2l+1}
+a_{2l+2}
+a_{2l+3}.
\end{equation}
Then the series
\begin{equation*}
\sum\limits_{l\in\frac12\NN_0}(2l+1)
\left|
{\hbar}
\sum\limits^{2l+1}_{m=1}
a_{m-2}-2a_m+a_{m+2}
\right|
\end{equation*}
is convergent if the series
\begin{equation}
\label{EQ:convergence}
\sum\limits_{l\in\frac12\NN_0}(2l+1)
\left|
{\hbar}
a_{2l+1}
\right|
\end{equation}
is convergent.
Let us denote $n=2l+1,l\in\frac12\NN_0$. Then we rewrite \eqref{EQ:convergence} to get
\begin{equation}
\sum\limits_{n\in\NN}n
\left|
{\hbar}
a_n
\right|.
\end{equation}
By repeating the relevant lines of proof in \cite[page 251]{IT1955}, we can show that the sufficient condition is as follows
$$
\sum\limits_{n\in\NN}n\log(n)|a_n|.
$$
Indeed, by \eqref{cosi}, we have
\begin{multline*}
\sum\limits_{n\in\NN}n
\left|
{\hbar}
a_n
\right|
=
\sum\limits_{n\in\NN}
n
\left|
\sum\limits_{\substack{k\in\NN \\ k-n=\text{odd}}}
a_k
\left(
\frac1{k+n}+\frac1{n-k}
\right)
\right|
\\=
\sum\limits_{n\in\NN}
n
\left|
\sum\limits_{\substack{k\in\NN \\ k-n=\text{odd}}}
a_k
\left(
\frac1{k+n}+\frac1{n-k}
-
\frac2n
\right)
\right|
\\=
\sum\limits_{n\in\NN}
n
\left|
\sum\limits_{\substack{k\in\NN \\ k-n=\text{odd}}}
\frac{k^2a_k}{n(n+k)(n-k)}
\right|
\\=
\sum\limits_{n\in\NN}
n
\left(
\sum\limits_{\substack{k=\overline{1,\ldots, n-1} \\ k-n=\text{odd}}}
%\sum\limits^{n-1}_{k=1}
\frac{k^2|a_k|}{n(n+k)(n-k)}
+
\sum\limits_{\substack{k=\overline{n,\ldots, \infty} \\ k-n=\text{odd}}}
\frac{k^2|a_k|}{n(n+k)(k-n)}
\right)
\\=
M+N,
\end{multline*}
where we used the fact that $f(0)=f(\pi)$ implies that
$$
\sum\limits_{n\in\NN}a_n=\sum\limits_{n\in\NN}(-1)^na_n=0.
$$
Then we have
\begin{align*}
M
&=
\sum\limits^{n-1}_{k=1}
k^2|a_k|
\sum\limits^{\infty}_{n=k}
\frac1{(n+k)(n-k)}\\
&\leq
\sum\limits^{n-1}_{k=1}
k^2|a_k|
\left(
\sum\limits^{2k}_{n=k}
\frac1{n(n-k)}
+
\sum\limits^{\infty}_{n=2k+1}
\frac2{n^2}
\right)
\\ &\leq
C
\sum\limits^{n-1}_{k=1}
k|a_k|\log(k).
\end{align*}
And for $N$ analogously, we get
\begin{multline*}
N
=
\sum\limits^{\infty}_{k=1}
k^2|a_k|
\sum\limits^{k}_{n=1}
\frac1{(n+k)(n-k)}
\leq
\sum\limits^{\infty}_{k=1}
k^2|a_k|
\left(
\sum\limits^{[\frac{k}2]}_{n=1}
\frac1{k-n}
+
\sum\limits^{k-1}_{n=[\frac{k}2]+1}
\frac1{k-n}
\right)
\\\leq
C
\sum\limits^{\infty}_{k=1}
k|a_k|(\log k + \log k).
\end{multline*}
This completes the proof.
\end{proof}

\begin{proof}[Proof of Theorem \ref{THM:Q1}]
Every Lie group homomorphism gives rise to a Lie algebra homomorphism. The converse is true since $G$ is simply connected. In particular, for every $\pi\in\Gh$, we have
\begin{equation}
\label{EQ:p-dp}
\pi(e^{iH})
=
e^{id\pi(H)},
\end{equation}
where $H\in\mathfrak{t}$. Let $\{H_1,\ldots,H_{l}\}$ in $H$
\begin{equation}
\label{EQ:coordinates}
\mathfrak{t}\ni H \longleftrightarrow (t_1,\ldots,t_l) \colon H=\sum\limits^l_{k=1}t_k H_k.
\end{equation}
By \eqref{EQ:p-dp} and \eqref{EQ:coordinates}, we have
\begin{equation}
\label{EQ:pi-dpi}
\pi(e^{i\sum\limits^l_{k=1}t_kH_k})
=
e^{i\sum\limits^l_{k=1}t_kd\pi(H_k)}.
\end{equation}
It can be proven that matrices $d\pi(H_k)$ can be diagonalised in the representation space of each $\pi\in\Gh$
\begin{equation}
\label{EQ:dpi-diagonal}
d\pi(H_k)
=
\diag(\mu_1,\ldots,\mu_{s})
\end{equation}
with the same $\{\mu_1,\ldots,\mu_s\}\subset \ZZ^l$ from the weight diagram.
We have
\begin{equation}
\sum\limits^l_{k=1}t_k d\pi(H_k)
=
\left(
\begin{matrix}
\sum\limits^l_{k=1}t_k \mu^k_1 & & \\
& \ddots& \\
& & \sum\limits^l_{k=1}t_k \mu^k_s
\end{matrix}
\right).
\end{equation}
By definition, we have
\begin{equation}
\widehat{f}(\pi)
=
\int_{G}f(u)\pi(u)^*\,du.
\end{equation}
Since $f$ and $\pi(u)_{mm}$ are central functions for $m=1,\ldots,\dpi$, the application of Weyl's integral formula \eqref{EQ:Weyl-integral} yields
%Therefore, for the $(m,m)$ matrix entry of $\widehat{f}(\pi)$, taking into account Wey's integral formula \eqref{EQ:Weyl-integral}, we have
\begin{equation}
\label{EQ:Fourier-coeff-mm}
\widehat{f}(\pi)_{mm}
=
\frac1{|W|}
\int\limits_{\TT^l}f(t)\overline{\pi(t)_{mm}}\Delta^2(t)\,dt.
\end{equation}
By \eqref{EQ:dpi-diagonal} and \eqref{EQ:pi-dpi}, we get
\begin{equation}
\label{EQ:diagonal-element-of-pi}
\pi(t)_{mm}
=e^{i\mu_m \cdot t}.
\end{equation}
It can be easily shown that
\begin{equation}
\label{EQ:Weyl-denominator}
\Delta(t)
=
\prod\limits_{\alpha\in R_+}
(e^{i(\alpha,H)}+e^{-i(\alpha,H)}-2),
\end{equation}
where $R_+$ are the positive roots associated with $G$.
Thus, using expansion \eqref{EQ:torus-expansion},\eqref{EQ:diagonal-element-of-pi} and \eqref{EQ:Weyl-denominator}, we obtain
\begin{equation}
\widehat{f}(\pi)_{mm}
=
\int\limits_{\TT^l}\sum\limits_{k\in\ZZ^l}a_ke^{2\pi i(k,t)}e^{-2\pi i (\mu_m, t)}\Delta^2(t)\,dt.
\end{equation}
Since the Weyl group $W$ is finite, the last sum can be represented as follows
\begin{equation}
\label{EQ:finite-sum}
\widehat{f}(\pi)_{mm}
=
\sum\limits^{\nu}_{k=1}
a_k \widehat{\Delta^2}(k-\pi_{mm}).
%c_1a_{m+q_1}
%+
%c_2a_{m+q_2}
%+\cdots
%+c_{v}a_{m+q_{v}}.
\end{equation}
By the assumption, $f$ is even function (see Definition \ref{DEF:even-odd-G}). Then its Fourier coefficients 
$\widehat{f}(\pi)$ are self-adjoint operators (Proposition \ref{PROP:sa-even})
$$
\widehat{f}(\pi)^*=\widehat{f}(\pi).
$$
Therefore, we have
\begin{equation}
\|\widehat{f}(\pi)\|_{S^1(\mathcal{H}^{\pi})}
=
\sum\limits^{\dpi}_{m=1}|\widehat{f}(\pi)_{mm}|.
\end{equation}
By definition, we have
\begin{equation}
\label{EQ:connection}
\|\widehat{f}\|_{\ell^1_{sch}(\Gh)}
=
\sum\limits_{\pi\in\Gh}
\dpi\|\widehat{f}(\pi)\|_{S^1(\mathcal{H}^{\pi})}
=
\sum\limits_{\pi\in\Gh}
\dpi
\sum\limits^{\dpi}_{m=1}
\left|
\sum\limits^v_{j=1}a_{j}\widehat{\Delta^2}(\pi_{mm}-j)
\right|.
\end{equation}
This completes the proof.
%
%The Fourier coefficients $b_k$  of the odd re-expansion $f_{odd}=\sum\limits_{k\in\ZZ}b_k e^{ikt}$ of $f$ are given by
%\begin{equation}
%b_k
%=
%({\hbar}^o
%a)_k.
%%\frac2{\pi}
%%\sum\limits^{\infty}_{m=1}a_m(\frac1{m+k}-\frac1{k-m}).
%\end{equation}
%By Theorem \ref{THM:Q1}, the Fourier coefficients $\widehat{\Ext[f_{odd}]}$ are integrable, i.e.
%$$
%\widehat{\Ext[f_{odd}]}\in\ell^1_{sch}(\Gh),
%$$
%if and only if
%$$
%\sum\limits_{\pi\in\Gh}
%\dpi
%\sum\limits^{\dpi}_{m=1}
%\left|
%\sum\limits^v_{j=1}c_ja_{m+q_j}
%\right|
%<+\infty.
%$$
\end{proof}

%\appendix
%\section{Some facts}

\end{document}